\undefined\PassOptionsToPackage{dvips}{graphicx}%
\newtheorem{theorem}{Théorème}   
\newtheorem{proposition}{Proposition}[section]
\newtheorem{definition}[proposition]{Définition}
\newcommand {\junk}[1]{}
\newenvironment{proof}[1]{
\trivlist \item[\hskip \labelsep{\bf #1}]}{\hfill\mbox{$\Box$}
\endtrivlist}
\def\maketitle{\par
\begingroup
\def\@makefnmark{\hbox
to 0pt{$^{\@thefnmark}$\hss}}
\if@twocolumn
\twocolumn[\@maketitle]
\else \newpage
\global\@topnum\z@ \@maketitle
\fi\thispagestyle{plain}\@thanks
\endgroup
\setcounter{footnote}{0}
\let\maketitle\relax
\let\@maketitle\relax
\gdef\@thanks{}\gdef\@author{}\gdef\@title{}\let\thanks\relax} 
\def\.@{\char'76}
\def \Z{\mathbb{Z}}
\def \N{\mathbb{N}} 
\def\={\hbox{~{\bf =}~}}
\def \num{{\rm n}$^{\rm o}$}
\def \noi {\noindent}
\def \ss {\smallskip}
\def \sni {\ss\noi}
\def \ms {\medskip}
\def \bs {\bigskip}
\def \bni {\bs\noi}
\def \snic#1 {\vspace{.1cm}\noindent\centerline{$#1$}\vspace{.1cm}}
\def \snif#1#2#3 {\vspace{#1}\noindent\centerline{$#3$}\vspace{#2}}
\def \Av {{\rm Av}}
\def \H{{\rm H}}
\def \KH{{K^\H}}
\def \VH{{V^\H}}
\def \Kac{{K^{\rm ac}}}
\def \Vac{{V^{\rm ac}}}
\def \Ksep{{K^{\rm sep}}}
\def \Vsep{{V^{\rm sep}}}
\def \GK{\Gamma_K}
\def \GL{\Gamma_L}
\def \GKac{\Gamma_{\Kac}}
\def \KR{\overline{K}}
\def \LR{\overline{L}}
\def \GtK{\widetilde{\GK}}
\def \GtKa{\widetilde{\GKac}}
\def \Tac{{\bf T_{cvac}}}
\def \ba{{\bf a}}
\def \num {{n$^{{\rm o}}$}}
\def \pgn {poly\-gone de Newton }
\def \dimm {description immédiate }
\def \iso {iso\-mor\-phisme }
\patchcmd{\sectionmark}{\MakeUppercase}{}{}{}
\begin{document}
\title{ Construction du hensélisé d'un corps valué }
\author{
F.-V. Kuhlmann
($\!$\thanks{Department of Mathematics and Statistics
     University of Saskatchewan
     106 Wiggins Road, Saskatoon, SK, S7N 5E6, Canada.
E-mail: fvk@math.usask.ca.
     Home Page: \url{http://math.usask.ca/~fvk/index.html}}~),
Henri Lombardi
($\!$\thanks{Laboratoire de Mathématiques, 
(UMR CNRS 6623), UFR des Sciences et
Techniques,
Université de Franche-Comté, 25030 Besan\c{c}on cedex, France.
E-mail: henri.lombardi@univ-fcomte.fr. Home Page: \url{http://hlombardi.free.fr}}~)
}
\date{Mars 2000}
\thispagestyle{empty}
\maketitle

\begin{quotation}
Ceci est un article paru au Journal of Algebra,
 {\bf 228}, (2000), p. 624--632. Nous avons rectifié l'orthographe et mis à jour les références. Nous avons aussi rectifié un typo qui s'est glissé pour une cause inconnue dans l'article paru au Journal of Algebra concernant les pentes du polygone de Newton. 
\end{quotation}

\begin{abstract}
We give an explicit construction of the henselization of a valued field, with a constructive proof. It is analogous to the construction of the real closure of a discrete ordered field.

Nous donnons une construction explicite, et constructivement 
prouvée, du hensélisé d'un corps valué. Cette construction 
peut être considérée comme l'analogue, dans le cas valué, de 
la construction de la clôture réelle d'un corps ordonné. 
\end{abstract}

\bni Classification AMS : 12J10, 12F05, 13J15, 12Y05, 03F65

\bni Mots clés :  Corps valué, Hensélisation, Mathématiques 
constructives.
\section*{Introduction} \label{sec Introduction}
\addcontentsline{toc}{section}{Introduction}

Il est connu que le hensélisé d'un corps valué est unique à 
\iso unique près. Cette situation est analogue à celle de la 
clôture réelle d'un corps ordonné. Dans ce dernier cas, non 
seulement on sait calculer dans la clôture réelle en utilisant des 
algorithmes dérivés de l'algorithme de Sturm, mais en plus on peut 
{\em  prouver constructivement} que ces calculs forment un tout 
cohérent, ce qui donne en fin de compte une construction 
entièrement satisfaisante de la clôture réelle (cf.  
\cite{Del,Hol,Lom1,LR,Sand}). 

\ss Il est naturel que le hensélisé d'un corps valué puisse lui 
aussi être construit de manière explicite. Du point de vue 
technique, les algorithmes du type Sturm basés sur des divisions 
successives sont remplacés, en théorie des corps valués, par 
l'usage systématique des polygones de Newton. 

\ss Dans cette note, nous utilisons des résultats de l'article 
\cite{CLR}  pour certifier constructivement la construction 
que nous donnons du hensélisé d'un corps valué. 

Signalons également un article en
préparation \cite{KL} concernant les calculs dans les extensions 
algébriques arbitraires d'un corps valué.
\section{Extensions algébriques d'un corps valué} 
\label{sec ext alg cov} 

Nous précisons tout d'abord quelques notations. Un corps valué est 
donné par un couple $(K,V)$ où $K$ est un corps et $V$ un anneau 
de valuation de ce corps, c.-à-d. un anneau qui vérifie l'axiome 
de Krull 
$$
\forall x\in K^{\times}~~~(~x\in V~ {\rm  ou} ~ 1/x\in V~)
$$
On notera $v_K:K^{\times}\rightarrow K^{\times}/V^{\times}$ la 
valuation correspondante. On étend $v_K$ à $K$ tout entier en 
posant $v_K(0)=\infty$. Les non unités de $V$, qui sont aussi les 
$x\in K$ tels que $v_K(x)>v_K(1)$, forment son idéal maximal que 
nous noterons $M_V$. Le groupe ordonné $\GK:=K^{\times}/V^{\times}$ 
est appelé le groupe de valuation, et le corps $\KR:=V/M_V$ le corps 
résiduel du corps valué $(K,V)$. Nous notons 
$\GtK:=\GK\cup\{\infty\}$. Lorsque 
$(L,W)$ est une extension valuée de $(K,V)$ (i.e., $K\cap W=V$), 
$\GK$ s'identifie à un sous-goupe de $\GL$ et $\KR$ à un sous 
corps de $\LR$. Si en outre $L$ est une clôture algébrique de $K$ 
alors $\GL$ s'identifie à la clôture divisible de $\GK$. 

\ss D'un point de vue calculatoire, pour traiter un corps valué 
$(K,V)$, on peut partir d'un anneau intègre $A$, dont le corps des 
fractions est $K$, dans lequel les opérations de base de l'anneau 
sont explicites, et à l'intérieur duquel on dispose des 
prédicats $~v_K(x)\geq v_K(y)~$ et $~x=0~$.  

Les algorithmes utilisés dans nos constructions prennent en entrée 
une liste $\ba=(a_1,\ldots,a_n)$ d'éléments de $A$, utilisent des 
polynômes $P_1,\ldots,P_r$ de $\Z[x_1,\ldots,x_n]$ et selon la 
réponse à certains tests

\snic{P_i(\ba)=0~?~~~~~~{\rm  ou}~~~~~~~ v_K(P_h(\ba))\geq 
v_K(P_k(\ba))~?
}

\sni produisent une sortie convenable. Nous appellerons de tels 
algorithmes des algorithmes {\em  uniformes dans $(K,V)$} et les 
calculs correspondants des {\em  calculs  uniformes dans $(K,V)$}.  

\ss Un théorème classique affirme que tout corps valué $(K,V)$ 
peut être étendu en un corps valué algébriquement clos 
$(\Kac,\Vac)$ ($\Vac\cap K = V$). Ce théorème ne peut pas être 
prouvé constructivement (en particulier il n'existe pas de méthode 
constructive générale pour obtenir une clôture algébrique d'un 
corps.) 

Il existe néanmoins des versions constructives de ce théorème 
classique. En voici une. 

Notons $\Tac$ la théorie formelle des corps valués 
algébriquement clos basée sur le langage des anneaux, avec en 
outre un prédicat $\Av(x)$ pour l'appartenance à l'anneau de 
valuation. Si   $(K,V)$ est un corps valué, on considère la 
théorie formelle $\Tac(K,V)$ qui est la théorie formelle 
construite à partir de 
$\Tac$ dans laquelle on a introduit comme constantes et relations le 
diagramme de $(K,V)$. 

Le théorème classique que nous venons d'évoquer est 
équivalent, modulo le théorème de complétude de Gödel (qui 
est une variante faible de l'axiome du choix{\footnote{Il est 
surprenant que dans presque tous les livres de logique mathématique, 
le théorème de complétude de Gödel soit affirmé comme une 
vérité de caratère absolu, avec le même degré de certitude 
par exemple que le théorème d'incomplétude, ou que le 
théorème sur la transcendance de $\pi$. Alors que le théorème 
de complétude n'est, du point de vue classique lui-même, qu'une 
variante faible de l'axiome du choix. Ce contre-sens manifeste commis 
dans les livres de logique (par les professionnels de la chose, donc) 
est sans doute d\^u au grand réconfort moral que le théorème de 
complétude apporte aux adeptes de la théorie des ensembles 
classique~: notre logique est la bonne puisque le théorème de 
complétude dit qu'elle est adéquate à la théorie naïve 
des ensembles. Eh non, c'est d'une théorie (pas si naïve que 
cela) des ensembles avec axiome du choix et principe du tiers exclu 
qu'il s'agit.}}), au théorème concret suivant~: 

\begin{theorem} 
\label{th Tac}
Avec les notations ci-dessus, la théorie formelle $\Tac(K,V)$ est 
cohérente.
\end{theorem}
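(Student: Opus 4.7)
{Esquisse de preuve.}
Le plan consiste à prouver la cohérence de $\Tac(K,V)$ en raisonnant par l'absurde et en ramenant toute dérivation hypothétique d'une contradiction à un calcul fini, concret, effectué à partir du diagramme de $(K,V)$ et d'un nombre fini d'instances des axiomes de corps valués algébriquement clos. Si $\Tac(K,V)$ était incohérente, une telle dérivation n'utiliserait, par finitude des preuves formelles, qu'un nombre fini de constantes $a_1,\dots,a_n\in A$, un nombre fini de tests $P_i(\ba)=0$ et $v_K(P_h(\ba))\geq v_K(P_k(\ba))$, et un nombre fini de symboles introduits par les axiomes (racines de polynômes, témoins d'appartenance à l'anneau de valuation). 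Il suffit donc de montrer que toute configuration finie de ce type, compatible avec les vérités effectives de $(K,V)$, peut être satisfaite.

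La première étape consiste à normaliser une telle dérivation sous forme d'un \emph{arbre d'évaluation dynamique}~: à chaque fois qu'un axiome introduit un nouveau témoin (une racine d'un polynôme $P\in V[X]$, ou la disjonction de Krull $x\in V$ ou $1/x\in V$), on ouvre un certain nombre de branches correspondant aux cas possibles. L'idée-clé, empruntée à \cite{CLR}, est que le \pgn de $P$, calculé à partir des valuations des coefficients de $P$, donne \emph{la liste finie, complète et correcte} des valuations possibles des racines de $P$ dans toute extension valuée algébriquement close~: ceci transforme l'instance d'axiome de clôture algébrique en une disjonction finie de branches explicites, chacune adjoignant une nouvelle constante $\alpha$ avec une valuation $v_K(\alpha)$ prescrite (une pente du \pgn) et compatible avec tous les tests déjà effectués.

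La deuxième étape est alors de montrer que sur chaque feuille de l'arbre, les tests de base $~P_i(\ba)=0~$ et $~v_K(P_h(\ba))\geq v_K(P_k(\ba))~$ portant sur les constantes d'origine $a_1,\dots,a_n$ ont des réponses cohérentes avec ce qui se passe effectivement dans $(K,V)$. En effet, les propriétés des polygones de Newton (la valuation du produit est la somme des valuations, le polygone d'un produit est la somme de Minkowski des polygones des facteurs) garantissent que les valuations affectées dans les branches sont compatibles avec les valuations déjà connues dans $K$~; autrement dit, aucun test concernant uniquement $K$ ne peut changer de valeur par passage à une extension dynamique. Une dérivation de $0=1$ sur une feuille produirait alors une identité algébrique manifestement fausse dans le corps $K$ lui-même, ce qui est la contradiction recherchée.

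L'obstacle principal sera le contrôle fin de la cohérence lors de \emph{l'itération} de l'ouverture de branches~: lorsqu'on adjoint successivement plusieurs racines $\alpha_1,\alpha_2,\ldots$, les tests de valuation sur des polynômes en ces nouvelles variables doivent se ramener, via la structure du \pgn, à des tests sur les coefficients, c'est-à-dire in fine à des tests sur $K$. C'est précisément le contenu technique des résultats de \cite{CLR} invoqués dans l'introduction, et c'est la clef de voûte qui permet de conclure que l'arbre dynamique n'engendre jamais d'incohérence, donc que $\Tac(K,V)$ est cohérente.
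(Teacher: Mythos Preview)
Your sketch is essentially the approach the paper takes: the paper itself gives no proof but simply refers to \cite{CLR} (combining théorèmes 1.1 et 4.3 there), and your outline --- reduce a hypothetical derivation of $0=1$ to a finite dynamic evaluation tree, control the branching on new roots via Newton polygons, and check that no leaf can contradict the actual diagram of $(K,V)$ --- is precisely the dynamic-method argument of \cite{CLR}. Since both you and the paper ultimately defer to \cite{CLR} for the technical core (the consistency under iterated adjunction that you correctly flag as the \emph{obstacle principal}), the approaches coincide.
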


Une preuve constructive de ce théorème concret peut être 
trouvée par exemple dans \cite{CLR} (en mettant bout à bout les 
théorèmes 1.1 et 4.3).

\ss Rappelons maintenant quelques résultats de la théorie 
classique (a priori non constructive) du hensélisé d'un corps 
valué. \`A l'intérieur d'une clôture algébrique valuée 
$(\Kac,\Vac)$ de $(K,V)$ se trouve le hensélisé $(\KH,\VH)$  de 
$(K,V)$ (avec $\VH=\Vac\cap \KH$). Ce corps est défini comme le 
corps fixe du groupe des $K$--automorphismes de $\Ksep$ (la clôture 
séparable de $K$) qui fixent $\Vsep=\Vac\cap \Ksep$. Le corps $\KH$ 
peut être caractérisé de manière plus terre à terre comme la 
plus petite sous extension de $(\Kac,\Vac)$ qui soit stable par le 
processus d'ajout d'un {\em  zéro à la Hensel}. Un zéro à la 
Hensel codé par un couple $(P,a)\in V[X]\times V$,  sous la 
condition que $P(a)\in M_V$ et $P'(a)\in V^{\times}$, est l'unique 
élément $\alpha\in \Kac$ qui vérifie 
$$P(\alpha)=0~~~~~~{\rm  et}~~~~~~ \alpha - a\in M_V
$$

L'extension $(\KH,\VH)$ est immédiate en ce sens que ni le groupe de 
valuation, ni le corps résiduel ne changent. Tout élément de 
$\KH$ possède alors une {\em  \dimm} dans $K$ au sens suivant.

\begin{definition} \label{def explicitly immediate}
Soit $\xi$ un élément de $\Kac$. Soit $(L,W):=(K [\xi],\Vac\cap 
L)$. Alors $\xi$ est dit {\em  immédiat au dessus de}  $(K,V)$ s'il 
existe un $x\in K$ et  $\mu\in M_W$ avec $\xi = x (1+\mu)$.
Nous disons que $x$ est une {\em description immédiate} de $\xi$ 
dans $(K,V)$.
\end{definition}
\section{Calculs dans le hensélisé} 
\label{sec calc hens}
Dans cette section, nous faisons comme si la théorie classique du 
hensélisé d'un corps valué assurait l'existence de $(\Kac,\Vac)$ 
et de $(\KH,\VH)$ et nous montrons comment il est alors possible de 
calculer au sein du hensélisé. 

Puisque $\GKac$ s'identifie à la clôture divisible de $\GK$, on peut 
considérer la valuation $v_\Kac$ comme un prolongement de $v_K$, et 
nous les noterons toutes deux, pour simplifier, tout simplement $v$.

\smallskip Un élément du hensélisé est obtenu par extensions 
successives de $(K,V)$, en ajoutant des zéros à la Hensel l'un 
après l'autre. Lors de la première étape, $\xi_1$ est un zéro 
à la Hensel codé par $(P_1,a_1)\in V[X]\times V$. Ceci fournit le 
corps valué $(K_1,V_1)$ avec $K_1=K[\xi_1]$ et $V_1= K_1\cap\Vac$. 
Un élément arbitraire de $K_1$ est donné sous la forme 
$R(\xi_1)$ avec $R\in K[X]$. On poursuit ensuite de manière 
récursive. Pour le zéro $\xi_2$,  $(K_1,V_1)$ remplace $(K,V)$ et 
ainsi de suite. 

Si nous savons dans chacune de ces extensions successives, calculer 
pour n'importe quel élément $\xi$ une \dimm $x(\xi)$ dans $K$, 
alors nous savons donner une réponse aux tests $~\xi=0~?~$ et 
$~v(\xi)\ge v(\xi')~?~$ (il suffit de faire le test sur des 
descriptions immédiates de~$\xi$ et $\xi'$).  D'autre part, si $\xi$ 
est obtenu comme élément d'une extension $K[\xi_1,\ldots,\xi_n]$  
et  $\zeta$ est obtenu comme élément d'une extension 
$K[\zeta_1,\ldots,\zeta_m]$, alors on peut les considérer tous deux 
comme éléments de $K[\xi_1,\ldots,\xi_n,\zeta_1,\ldots,\zeta_m]$, 
de sorte que l'on a bien la possibilité de faire tous les calculs et 
tous les tests correspondants à la structure de corps valué 
$(\KH,\VH)$.

\ms Rappelons que le \pgn d'un polynôme $P(X)=\sum_{i=0,\ldots,d} 
p_iX^i\in K[X]$ (avec $p_d\not= 0$) est formé à partir de la liste 
de couples dans $\N\times \GtK$ 

\snic{[(0,v(p_0)),(1,v(p_1)),\ldots,(d,v(p_d))].
}

Le \pgn est ``l'enveloppe convexe inférieure'' de cette liste. 
Formellement on peut le définir comme la liste extraite qui 
vérifie la condition suivante~: les couples $(i,v(p_i))$ et 
$(j,v(p_j))$ sont consécutifs dans la liste extraite si et seulement 
si on a~:

\centerline {si  $ 0\leq k < i~~~~~~~~ (v(p_i)-v(p_k))/(i-k) ~<~  
(v(p_i)-v(p_j))/(i-j)$} 

\centerline {si $ i < k < j~~~~~~~~ (v(p_i)-v(p_k))/(i-k) ~\ge~  
(v(p_i)-v(p_j))/(i-j)$}

\centerline {~si $ j < k \le d~~~~~~~~ (v(p_i)-v(p_j))/(i-j) ~<~ 
(v(p_j)-v(p_k))/(j-k) $} 

\noi On montre facilement que si  $(i,v(p_i))$ et $(j,v(p_j))$ sont 
deux sommets consécutifs du \pgn du polynôme $P$, celui-ci admet 
exactement $~j-i~$ zéros dans $\Kac$ dont la valuation soit égale 
à 
$~(v(p_i)-v(p_j))/(j-i)~$. En particulier, si $~j=i+1$, le zéro 
correspondant est nécessairement dans le hensélisé $\KH$. La 
proposition suivante, dont la preuve ressort d'un calcul immédiat, 
donne quelques précisions sur ce cas. Remarquons aussi que, vues nos 
hypothèses sur $(K,V)$, nous pouvons déduire du \pgn de $P$ la 
liste ordonnée des valuations des zéros de $P$ dans 
$\GtKa=\GKac\cup\{\infty\}$. Notons aussi que si $0$ est un zéro de 
$P$ de multiplicité $h$, cela correspond à une pente infinie du 
segment initial de largeur $h$ dans le polygone de Newton.

Introduisons une terminologie~: 

\begin{definition} 
\label{def poly spe}
Un polynôme $T\in V[X]$ est dit {\em  spécial} s'il est de la forme 
$~X^d-X^{d-1}+t_{d-2}X^{d-2}+\cdots +t_1X+t_0~$ avec les $~t_i\in 
M_V$.  Le zéro de $T$ codé à la Hensel par $(T,1)$ est appelé 
le {\em  zéro spécial} de $T$.
\end{definition}

Notez que les autres zéros de $T$ dans $\Kac$ sont dans $M_\Vac$.

\begin{proposition} \label{lem NPH} 
{\rm  (Lemme de Hensel, version polygone de Newton)}
 Soit $(K,V)$ un corps valué et $P\in V [X]$ un polynôme~: 
$P(X)=\sum_{i=0,\ldots,d} p_iX^i$. Supposons que le \pgn de $P$ 
admette une pente ``isolée'' de $(k,v(p_k))$ à $(k+1,v(p_{k+1}))$. 

\sni 1) Alors l'unique zéro  $\alpha $ de $P$  tel que 
$v(\alpha)=v(p_k)-v(p_{k+1})$ admet $~-  p_k  /  p_{k+1}~$ pour \dimm: 

\snic{\alpha = - { p_k  \over  p_{k+1} }(1+\mu) \quad {\rm avec } \; 
\mu \in M_\Vac
}

\sni 2) En outre $\nu=1+\mu$  est un zéro à la Hensel codé par 
$(Q,1)$ où  $Q\in  V [Y]$ est donné par  

\snic{Q(Y)={p_{k+1}^k \over p_k^{k+1}}P(-{p_k\over p_{k+1}}Y)
}

\noi ($\nu$ est l'unique zéro de $Q$ dans $\Vac^{\times}$). 

\sni 3) Si on pose $~\sum_{i=0,\ldots,d} r_iX^i:=R(X):=Q(1+X)~$, on a 
$~v(r_0)>0~$ et  $~v(r_1)=0~$. \\
Si $r_0=0$ alors $\mu=0$. Si $r_0\neq 
0$ on pose $~S(X):=(1/r_0)R(-r_0X/r_1)~$ et $~T(X)=X^dS(1/X)~$ et on 
obtient que $T\in V[X]$ est un polynôme spécial, de sorte que tous 
les zéros de $T$, sauf celui correspondant au zéro $\alpha$ de $P$ 
sont dans $M_\Vac$. En résumé, un zéro $\alpha$ correspondant 
à une pente isolée d'un \pgn peut toujours être explicité soit 
comme un élément de $K$, soit sous une forme 
$~(a\beta+b)/(c\beta+d)~$ où $\beta$  est le zéro spécial d'un 
polynôme spécial $T$, $a,b,c,d\in V$, $~(c\beta+d)\neq 0$ et $(ad-
bc)\neq 0$.  
\end{proposition}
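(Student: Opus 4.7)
Je réduirais tout à un calcul unique organisé autour de la substitution $X = -(p_k/p_{k+1})Y$, qui ``horizontalise'' la pente isolée du \pgn de $P$ : l'arête de $(k,v(p_k))$ à $(k+1,v(p_{k+1}))$ devient, pour le polynôme transformé $Q$, le segment de $(k,0)$ à $(k+1,0)$.

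Pour établir (1) et (2), je développerais
$$
Q(Y) \;=\; \frac{p_{k+1}^k}{p_k^{k+1}}\,P\!\left(-\frac{p_k}{p_{k+1}}Y\right) \;=\; \sum_{i=0}^{d} (-1)^i\,p_i\,p_k^{\,i-k-1}\,p_{k+1}^{\,k-i}\,Y^i,
$$
de sorte que le coefficient $q_i$ a valuation $v(p_i)+(i-k-1)v(p_k)+(k-i)v(p_{k+1})$. Un calcul direct donne $v(q_k)=v(q_{k+1})=0$ ; pour $i<k$ et $i>k+1$, les inégalités de pentes strictes qui définissent le caractère isolé de l'arête (la définition formelle du \pgn rappelée plus haut) donnent $v(q_i)>0$. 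Donc $Q\in V[Y]$, et sa réduction modulo $M_V$ est $\overline{Q}(Y)=(-1)^k Y^k(1-Y)$. Par suite $Q(1)\in M_V$ et $Q'(1)\equiv (-1)^{k+1}\pmod{M_V}$ est une unité de $V$ : le couple $(Q,1)$ code un zéro à la Hensel $\nu=1+\mu\in\Vac^{\times}$ avec $\mu\in M_\Vac$. En posant $\alpha=-(p_k/p_{k+1})\,\nu$, on obtient $P(\alpha)=0$ et $v(\alpha)=v(p_k)-v(p_{k+1})$, d'où (1) et (2) ; l'unicité de $\alpha$ (rappelée juste avant l'énoncé par le décompte des zéros selon les sommets consécutifs) correspond à l'unicité du zéro de Hensel.

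Pour (3), je calculerais $R(X)=Q(1+X)$ et lirais $r_0=Q(1)\in M_V$, $r_1=Q'(1)\in V^{\times}$, d'où $v(r_0)>0$ et $v(r_1)=0$. Si $r_0=0$, alors $\nu=1$ donc $\mu=0$. Sinon, le développement de $S(X)=(1/r_0)R(-r_0 X/r_1)$ donne $s_0=1$, $s_1=-1$, et pour $i\geq 2$
$$
s_i \;=\; (-1)^i\, r_i\, r_0^{\,i-1}/r_1^{\,i},
$$
qui vérifie $v(s_i)\geq (i-1)v(r_0)>0$ puisque $r_i\in V$ et $v(r_1)=0$ ; donc $s_i\in M_V$. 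Le polynôme réciproque $T(X)=X^d S(1/X)=X^d-X^{d-1}+\sum_{i=2}^d s_i\,X^{d-i}$ est alors spécial au sens de la définition donnée plus haut. En remontant les substitutions, si $\beta$ est le zéro spécial de $T$, alors $1/\beta$ est un zéro de $S$, lequel correspond au zéro de Hensel $\mu=-r_0/(r_1\beta)$ de $R$. En reportant dans (1) et en chassant le dénominateur $p_{k+1}$ on obtient
$$
\alpha \;=\; \frac{-p_k r_1\,\beta \,+\, p_k r_0}{p_{k+1} r_1\,\beta \,+\, 0},
$$
qui est bien de la forme $(a\beta+b)/(c\beta+d)$ avec $a,b,c,d\in V$, $c\beta+d\neq 0$ (car $p_{k+1},r_1\neq 0$ et $\beta$ est voisin de $1$) et $ad-bc=-p_k p_{k+1}r_0 r_1\neq 0$ dans le cas traité $r_0\neq 0$.

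L'obstacle principal est le contrôle des valuations à l'étape 1 : il faut traduire fidèlement la condition ``pente isolée'' en les inégalités strictes $v(q_i)>0$ pour $i\notin\{k,k+1\}$, en exploitant la convexité stricte de l'enveloppe inférieure du \pgn telle qu'énoncée dans la définition formelle. Tout le reste est un calcul de routine.
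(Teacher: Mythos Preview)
Your proposal is correct and is precisely the ``calcul immédiat'' to which the paper refers without writing it out: the substitution $X=-(p_k/p_{k+1})Y$ to normalise the isolated slope, the reduction $\overline{Q}(Y)=(-1)^kY^k(1-Y)$ giving the Hensel code $(Q,1)$, and the chain $R\to S\to T$ producing the special polynomial are exactly the intended steps. Your explicit identification of $a,b,c,d$ and the check $ad-bc=-p_kp_{k+1}r_0r_1\neq 0$ is a useful complement that the paper leaves implicit.
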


Notons qu'un zéro à la Hensel $\alpha$ codé par $(R,f)$ 
correspond à une pente isolée, de $(0,v(p_0))$ à $(1,v(p_1))$, 
du \pgn de $P(X)=R(X+f)$. On a donc, de manière explicite, pour tout 
zéro à la Hensel $\alpha$,  $~K[\alpha]=K[\beta]~$ où   $\beta$ 
est le zéro spécial d'un polynôme spécial.

Vu ce dernier résultat, et vus les commentaires précédents, la 
possibilité de calculer explicitement dans $(\KH,\VH)$ est ramenée 
à la proposition suivante.

\begin{proposition} 
\label{prop calc Hens} Soit $T$ un polynôme spécial, $\beta$ son 
zéro spécial dans $\KH$ et $Q\in K[X]$, alors une \dimm de 
$Q(\beta)$ peut être obtenue par des calculs uniformes dans $(K,V)$.
\end{proposition}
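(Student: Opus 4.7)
The plan is to prove the proposition by induction on $\deg Q$, exploiting the special form of $T$ together with Proposition~\ref{lem NPH}. By Euclidean division in $K[X]$, write $Q = ST + R$ with $\deg R < d$; since $T(\beta)=0$, one has $Q(\beta) = R(\beta)$, so we may assume $\deg Q < d$, and the case $Q$ constant is trivial.

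The key preliminary is to obtain an immediate description of $\mu := \beta - 1 \in M_{\VH}$. From $T(\beta)=0$ and the special form $T(X) = X^{d-1}(X-1) + \sum_{i=0}^{d-2} t_i X^i$, one gets $\mu = -\sum_{i=0}^{d-2} t_i \beta^{i-d+1}$. Because $\beta \equiv 1 \pmod{M_{\VH}}$, we have $v(\beta^{k}-1) \geq v(\mu)$ for every $k \in \Z$, whence
\[
v\Bigl(\mu + \sum_{i} t_i\Bigr) \;\geq\; v(\mu) + \min_i v(t_i) \;>\; v(\mu).
\]
The ultrametric inequality applied to $\mu = -\sum_i t_i + \bigl(\mu + \sum_i t_i\bigr)$ then forces $v(\sum_i t_i) = v(\mu)$ whenever $\mu \neq 0$. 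After the uniform test $\sum_i t_i \stackrel{?}{=} 0$: either $\mu = 0$ (so $\beta = 1$ and $Q(\beta) = Q(1) \in K$), or $m := -\sum_i t_i \in K^\times$ is an immediate description of $\mu$.

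For the inductive step, I expand $Q$ in powers of $X-1$ as $Q(X) = \sum_{j=0}^{d-1} c_j (X-1)^j$, so that $Q(\beta) = \sum_j c_j \mu^j$. Writing $\mu = m(1+\mu_1)$ with $v(\mu_1) > 0$, I compute uniformly $w := \min_j\bigl(v(c_j) + j\,v(m)\bigr)$, the minimizing set $J$ of indices achieving this minimum, and the candidate $\Sigma := \sum_{j \in J} c_j m^j \in K$. In the non-degenerate case $v(\Sigma) = w$ (testable uniformly), the remaining contributions — those from indices $j \notin J$ and the $\mu_1$-corrections from $j \in J$ — all have valuation strictly greater than $w$, and $\Sigma$ is an immediate description of $Q(\beta)$. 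In the cancellation case $v(\Sigma) > w$, I apply the same procedure to an auxiliary polynomial derived from a Taylor expansion of $Q$ at $1+m$, which isolates the next-order contribution. The main obstacle is termination of this refinement: it rests on the immediateness of $K[\beta]/K$ (guaranteeing that some immediate description exists in $K$) together with a well-founded measure on the iteration, equivalently captured by a Newton-polygon analysis of the resultant $\mathrm{Res}_X\bigl(T(X), Y - Q(X)\bigr)$ via Proposition~\ref{lem NPH}.
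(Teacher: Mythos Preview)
Your reduction to $\deg Q<d$ and your computation of an immediate description $m=-\sum_i t_i$ for $\mu=\beta-1$ are correct and pleasant. The non-cancellation case ($v(\Sigma)=w$) is also fine: the error term has valuation $>w$, so $\Sigma$ is an immediate description of $Q(\beta)$.

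The gap is the cancellation case, which is the whole difficulty. You describe an iterative refinement (re-expand $Q$ around $1+m$, repeat), but you neither specify it precisely nor prove it terminates. Your remark that termination ``rests on immediateness of $K[\beta]/K$ together with a well-founded measure \ldots{} equivalently captured by a Newton-polygon analysis of $\mathrm{Res}_X(T(X),Y-Q(X))$'' is a pointer to the missing argument, not an argument. In an immediate extension the successive approximations $1,\,1+m,\,\ldots$ can in principle form an unbounded pseudo-Cauchy sequence of increasing valuation; what stops the process is precisely an a~priori determination of $v(Q(\beta))$ from the resultant, and you have not carried that out. The announced ``induction on $\deg Q$'' is never actually used.

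The paper avoids iteration entirely. It works with all the conjugates $\beta_1=\beta,\beta_2,\ldots,\beta_d$ of $\beta$ (the other $\beta_i$ lie in $M_{\Vac}$) and computes the polynomials
\[
Q_1(X)=\prod_i\bigl(X-Q(\beta_i)\bigr),\qquad
Q_2(X)=\prod_i\bigl(X-(1-\beta_i)Q(\beta_i)\bigr)
\]
in $K[X]$ (e.g.\ as characteristic polynomials of $Q(M)$ and $(\I-M)Q(M)$ for $M$ the companion matrix of $T$). Since $v\bigl((1-\beta)Q(\beta)\bigr)>v(Q(\beta))$ while $v\bigl((1-\beta_i)Q(\beta_i)\bigr)=v(Q(\beta_i))$ for $i>1$, comparing the ordered slope lists of the Newton polygons of $Q_1$ and $Q_2$ tests $Q(\beta)=0$ and, if $Q(\beta)\ne0$, pinpoints $v(Q(\beta))\in\GKac$. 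Then one chooses an integer $m$ so that $v(\beta^mQ(\beta))=v(Q(\beta))$ differs from every $v(\beta_i^mQ(\beta_i))=v(Q(\beta_i))+m\,v(\beta_i)$ for $i>1$; such an $m$ is found after finitely many uniform tests because the $v(\beta_i)$ ($i>1$) are positive and only finitely many differences $v(Q(\beta_j))-v(Q(\beta_k))$ occur. For this $m$, the root $\beta^mQ(\beta)$ of
\[
Q_3(X)=\prod_i\bigl(X-\beta_i^mQ(\beta_i)\bigr)\in K[X]
\]
sits on an isolated slope of the Newton polygon of $Q_3$, and a single application of Proposition~\ref{lem NPH} gives an immediate description of $\beta^mQ(\beta)$, hence of $Q(\beta)$. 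This one-shot separation via the $\beta^m$ trick is exactly what your refinement loop is missing.
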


\begin{proof}{Preuve} 
Si $\beta=1$ il n'y a rien à faire. On supposera donc $\beta\neq 1$ 
(remarquons qu'on n'a pas en général de test permettant de savoir 
si $\beta\in K$ ou $\beta\notin K$). Appelons $\beta_1=\beta, 
\beta_2,\ldots ,\beta_d$ les zéros de $T$ dans $\Kac$ (on peut les 
supposer tous non nuls).

\sni Tout d'abord, nous donnons un algorithme qui permet de calculer 
$v(Q(\beta))\in\GtKa$, sans pour autant arriver nécessairement à 
l'expliciter comme élément de $\GtK$ (ceci sera fait, à coup 
s\^ur, dans un deuxième temps). Nous remarquons que   
$~v((1-\beta)Q(\beta))>v(Q(\beta))~$, sauf si $Q(\beta)=0$, tandis que
$~v((1-\beta_i)Q(\beta_i))=v(Q(\beta_i))~$ pour $i>1$. 

\noi Soit $~Q_1$ et $Q_2$ les deux polynômes dans $K[X]$ définis par

\snic{Q_1=\prod_{i=1,\ldots,d}(X-Q(\beta_i))~~~~~~{\rm  et}~~~~~~  
Q_2=\prod_{i=1,\ldots,d}(X-(1-\beta_i)Q(\beta_i))
}

\noi (ces deux polynômes sont faciles à calculer{\footnote{Par 
exemple si  $M$ est la matrice compagnon du poynome  $T$  ayant pour 
racines des $\beta_i$, le polynôme caractéristique de $M$ est egal au 
produit des $(X-\beta_i)$, et le polynôme caractéristique de  $Q(M)$ 
est egal au produit des $(X-Q(\beta_i))$.}}).  Il suffit d'établir 
la liste ordonnée des valuations des zéros de $Q_1$ dans $\GtKa$ 
et de la comparer à  la liste ordonnée des valuations des zéros 
de $Q_2$ dans $\GtKa$ pour tester si $Q(\beta )=0$, et dans le cas 
où $Q(\beta )\not= 0$, pour donner $v(Q(\beta))$ dans la clôture 
divisible de $\GK$ comme égal à la pente finie d'un segment bien 
déterminé du \pgn de $Q_1$. 
Si nous sommes dans le cas où $Q(\beta )\not=0$, nous allons 
maintenant  utiliser notre connaissance de $v(Q(\beta))$ en tant 
qu'élément de la clôture divisible de $\GK$ pour calculer une 
\dimm de $Q(\beta)$.

\sni Pour cela on remarque que 
$~v(\beta^mQ(\beta))=v(Q(\beta))~$ tandis que
$~v(\beta_i^mQ(\beta_i))=v(Q(\beta_i))+mv(\beta_i)>v(Q(\beta_i))~$ 
pour $i>1$ (sauf si $Q(\beta_i))=0$). Il existe  un entier $m$ tel que 
$mv(\beta_i)\neq v(Q(\beta))-v(Q(\beta_i))$ pour tous les entiers 
$i=2,\ldots ,d$. Un tel entier peut être calculé de la manière 
suivante~: il suffit que $mv(\beta_i)\neq v(Q(\beta_j))-v(Q(\beta_k))$ 
pour tous $j,k$ et tout $i>1$ (en se restreignant aux valuations 
finies). Comme le second membre ne prend qu'un nombre fini de valeurs 
et comme les  $v(\beta_i)$ ($i>1$) sont $>0$, on trouve $m$ après un 
nombre fini de tests concernant la liste des $v(\beta_i)$ ($i>1$) et 
celle des $v(Q(\beta_j))$. Pour un tel $m$ on calcule le polynôme 
$Q_3=\prod_{i=0,\ldots,d}(X-\beta_i^mQ(\beta_i))$. On sait que le 
zéro 
$\beta^mQ(\beta))$ de $Q_3$ correspond à une pente isolée de son 
\pgn et comme on connait la valeur de 
$~v(\beta^mQ(\beta))=v(Q(\beta))~$ dans $\GKac$ on sait de quelle 
pente isolée il s'agit. On utilise enfin la proposition \ref{lem 
NPH} pour obtenir une \dimm de $\beta^mQ(\beta)$ dans $K$, qui est 
aussi une \dimm de $Q(\beta)$.
\end{proof}

\section{La construction du hensélisé} 
\label{sec cons hens}
Du point de vue des mathématiques classiques, pour lesquelles 
l'existence de $(\Kac,\Vac)$ est assurée, la section 
\ref{sec calc hens}  donne une construction du hensélisé 
$(\KH,\VH)$ au sens suivant~:  $(\KH,\VH)$ est la limite inductive 
filtrante des extensions finies 
$(K[\alpha_1,\alpha_2,\ldots,\alpha_n],V_n)$ 
($V_n$ est l'unique anneau de valuation de 
$K[\alpha_1,\alpha_2,\ldots,\alpha_n]$ qui étend $V$) où chaque 
$\alpha_i$ est un  zéro à la Hensel (ou à la Newton-Hensel comme 
dans la proposition \ref{lem NPH}) sur le corps valué 
$(K[(\alpha_j)_{j<i}],V_{i-1})$.
Cette limite inductive est bien filtrante, car si on a deux extensions
$K[\alpha_1,\alpha_2,\ldots,\alpha_n]$ et
$K[\beta_1,\beta_2,\ldots,\beta_m]$, on a aussi l'extension
$$K[\alpha_1,\alpha_2,\ldots,\alpha_n][\beta_1,\beta_2,\ldots,\beta_m]
=
K[\beta_1,\beta_2,\ldots,\beta_m][\alpha_1,\alpha_2,\ldots, 
\alpha_n].$$
Enfin, il s'agit bien d'une construction car nous savons expliciter
la structure de chacune de ces extensions finies par des calculs 
uniformes qui n'utilisent que la structure de $(K,V)$ et la 
description de chaque générateur successif $\alpha_i$ par un code 
à la Hensel (ou à la Newton-Hensel).  

Pour que cette construction soit certifiée en mathématiques 
constructives, il faut démontrer constructivement que les calculs de 
la section \ref{sec calc hens} ne conduisent à aucune contradiction. 
Si cette preuve est faite, alors on peut voir  $(\KH,\VH)$ comme 
l'objet construit par ces calculs, et non plus, comme c'était le cas 
(intuitivement) dans la section \ref{sec calc hens}, comme un objet 
déjà construit a priori par une méthode abstraite. Le 
théorème \ref{th Tac} (tel qu'il est prouvé dans \cite{CLR}) 
nous donne  la preuve constructive de la cohérence des calculs dans 
la section \ref{sec calc hens}~: il suffit de vérifier que tous les 
raisonnements qui certifient ces calculs du point de vue des 
mathématiques classiques sont en fait des raisonnements qui peuvent 
être développés au sein de la théorie formelle $\Tac(K,V)$. 
Ainsi nous avons obtenu la preuve constructive du théorème 
suivant.
\begin{theorem} 
\label{th cons hens} \'Etant donné un corps valué $(K,V)$ dans 
lequel l'égalité et la divisibilité sont testables, il existe 
une  extension valuée $(\KH,\VH)$ qui contient un zéro à la 
Hensel pour tout code à la Hensel $(P,a)$ dans $~\VH[X]\times \VH$. 
Cette extension est 
unique à (K,V)--isomorphisme unique près si on réclame qu'elle 
soit engendrée par le processus de rajout de zéros à la Hensel. 
C'est une extension immédiate explicite de $(K,V)$~: tout 
élément de $\KH$ possède une \dimm dans $K$. Enfin l'égalité 
et la divisibilité sont testables dans $(\KH,\VH)$. 
\end{theorem}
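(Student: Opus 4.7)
Le plan est de construire $(\KH,\VH)$ directement comme la limite inductive filtrante évoquée au début de la section \ref{sec cons hens}, en s'appuyant sur les procédures de la section \ref{sec calc hens} pour en expliciter la structure. Concrètement, les éléments de $(\KH,\VH)$ sont des expressions formelles $R(\alpha_1,\ldots,\alpha_n)$ où $R\in K[X_1,\ldots,X_n]$ et chaque $\alpha_i$ est codé à la Hensel sur $K[\alpha_1,\ldots,\alpha_{i-1}]$, quitte, par la proposition \ref{lem NPH}, à ne conserver que les zéros spéciaux de polynômes spéciaux. Les opérations de corps s'effectuent formellement, tandis que les tests $\xi=0\,?$ et $v(\xi)\ge v(\xi')\,?$ se ramènent, via la proposition \ref{prop calc Hens}, à des tests analogues dans $K$ portant sur des descriptions immédiates. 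Comme ces derniers sont décidables par hypothèse, on récupère au passage la décidabilité de l'égalité et de la divisibilité dans $(\KH,\VH)$, ainsi que le caractère explicitement immédiat de l'extension.

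Le point délicat est la cohérence de ce système de calculs~: rien ne garantit a priori qu'une même quantité, calculée par deux chemins distincts (par exemple en permutant l'ordre des extensions successives, ou en regroupant différemment les générateurs), fournisse deux réponses compatibles. C'est ici qu'intervient le théorème \ref{th Tac}. Toute justification classique d'une étape de calcul --- existence et unicité du zéro à la Hensel associé à un code $(P,a)$, identification des pentes du \pgn, propriétés des polynômes spéciaux --- se formalise à l'intérieur de la théorie formelle $\Tac(K,V)$. Si l'un de ces calculs aboutissait à une contradiction, cette contradiction se traduirait en une preuve formelle d'incohérence dans $\Tac(K,V)$, ce qui est exclu par le théorème \ref{th Tac} (prouvé constructivement dans \cite{CLR}).

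Une fois la cohérence acquise, l'unicité à $(K,V)$--\iso unique près se déduit aisément~: tout code à la Hensel $(P,a)$ admet au plus un zéro $\alpha$ satisfaisant $\alpha - a\in M$ dans une extension immédiate donnée~; l'\iso naturel entre deux constructions alternatives s'obtient alors de proche en proche, en envoyant chaque générateur successif sur l'unique zéro correspondant dans l'autre extension. L'universalité pour le processus d'ajout de zéros à la Hensel se lit directement sur la construction.

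Le travail réellement délicat n'est donc pas la construction elle-même --- déjà essentiellement décrite dans la section \ref{sec calc hens} --- mais la vérification systématique que chaque preuve employée, notamment dans les propositions \ref{lem NPH} et \ref{prop calc Hens}, admet une traduction fidèle dans la théorie formelle $\Tac(K,V)$, au sens où chaque appel à l'existence d'une clôture algébrique valuée $(\Kac,\Vac)$ y est remplacé par un raisonnement logique conduit dans $\Tac(K,V)$. C'est là le cœur du travail constructif~; une fois qu'il est accompli, le théorème s'obtient en regroupant les sections \ref{sec calc hens} et \ref{sec cons hens}.
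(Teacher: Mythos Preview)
Ta proposition est correcte et suit essentiellement la même démarche que l'article~: construction de $(\KH,\VH)$ comme limite inductive filtrante d'extensions finies engendrées par des zéros à la Hensel, calculs et tests ramenés à $(K,V)$ via les propositions \ref{lem NPH} et \ref{prop calc Hens}, et certification constructive de la cohérence par le théorème \ref{th Tac} en observant que tous les raisonnements classiques employés se formalisent dans $\Tac(K,V)$. Tu détailles un peu plus l'argument d'unicité que ne le fait l'article, mais l'architecture est identique.
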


Signalons que la preuve que nous venons de donner (pour l'existence du 
hensélisé d'un corps valué) est l'analogue de celles 
développées pour la clôture réelle d'un corps ordonné dans 
\cite{Del} et \cite{Lom1} plutôt que de celles données dans 
\cite{Hol},  \cite{Sand} ou~\cite{LR}. 

\ms Le raisonnement précédent peut être rendu plus agréable et 
plus intuitif si on adopte le point de vue de l'évaluation 
dynamique, tel qu'il est développé dans \cite{CLR} ou  \cite{Lom2} 
à partir de l'intuition donnée par le système de calcul formel 
D5 (cf. \cite{DDD85} et \cite{DD89}). On constate que les axiomes des 
corps valués (et ceux des corps valués algébriquement clos) 
peuvent être mis sous une forme structurellement simple, ce que nous 
appelons des règles dynamiques. Une évaluation dynamique d'une 
structure algébrique incomplétement spécifiée revient alors 
à appliquer ces règles dynamiques à la structure 
incomplètement spécifiée. Par exemple un corps valué peut 
être compris comme un corps valué algébriquement clos 
incomplètement spécifié. Les calculs uniformes de la section    
\ref{sec calc hens} se situent naturellement dans ce cadre~: ils 
fournissent une évaluation dynamique du corps valué $(K,V)$ 
lorsqu'on le considère comme un corps valué algébriquement clos 
incomplètement spécifié. Une évaluation dynamique est un 
calcul arborescent recelant parfois des ambigüités incontournables 
(dans la clôture algébrique d'un corps ``général'' par exemple, 
les zéros d'un polynôme  sont jusqu'à un certain point 
indiscernables et cela crée une telle ambigüité, qui est la source 
de l'impossibilité d'exhiber constructivement la clôture 
algébrique comme un objet global parfaitement controlé). 
L'important est cependant, pour une structure dynamique, qu'elle ne 
s'effondre pas sous le coup d'une preuve de $1=0$ par exemple. Le 
théorème 4.3 dans \cite{CLR} certifie que l'évaluation dynamique 
d'un corps valué comme corps valué algébriquement clos ne 
produit pas de contradiction. Au sein des évaluations dynamiques du 
corps valué $(K,V)$, la partie ``hensélisé'' ne recèle pas 
d'ambigüité, et c'est ce qui fait que le hensélisé peut être 
en toute généralité construit comme un objet global parfaitement 
controlé, contrairement à la clôture algébrique valuée. 

\ms Nous terminons en remarquant que la définition abstraite du 
hensélisé $\KH$ comme corps fixe d'un certain groupe 
d'automorphismes de $\Kac$ possède une version ``dynamique'' qui lui 
est classiquement équivalente via le théorème de complétude de 
Gödel, et qui constitue donc une version constructivement 
satisfaisante du théorème classique correspondant. Le théorème 
classique dit que la définition abstraite ``par le haut'' de $\KH$ 
comme corps fixe d'un certain groupe coïncide avec une 
définition terre à terre ``par le bas'' (le plus petit corps stable 
par rajout de racines à la Hensel). Le théorème dynamique 
correspondant en caractéristique nulle est le suivant (nous ne 
savons pas pour le moment en donner une preuve constructive)~:
\begin{theorem} 
\label{th hensel 2} Considérons une présentation $(K,V)$ de corps 
valué de caractéristique nulle et évaluons la dynamiquement en 
tant que corps algébriquement clos valué.  Soit $t$ un terme 
apparaissant dans un tel arbre. Alors les deux propriétés 
suivantes sont équivalentes~:
\begin{itemize}
\item $t$ est égal (au sens de l'évaluation dynamique) à un 
élément de $K[\xi_1,\ldots,\xi_n]$ où les $\xi_j$ sont des 
zéros à la Hensel rajoutés en cascade les uns après les 
autres.
\item  si l'on introduit un symbole de fonction $\varphi$ soumis aux 
règles dynamiques spécifiant que $\varphi$ est un automorphisme de 
$\Kac$ qui fixe $K$ point par point et $\Vac$ globalement, alors $t$ 
est égal à $\varphi(t)$  (au sens de l'évaluation dynamique).

\end{itemize}

\end{theorem}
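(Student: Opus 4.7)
Le plan consiste à traiter les deux implications séparément, la première étant un argument d'unicité dynamique, la seconde demandant de produire effectivement une description à la Hensel de $t$.

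Pour le sens $(1)\Rightarrow(2)$, je procéderais par récurrence sur le nombre $n$ de zéros à la Hensel intervenant dans la représentation de $t$ comme élément de $K[\xi_1,\ldots,\xi_n]$. Les règles dynamiques attachées à $\varphi$ en font un automorphisme d'anneau fixant $K$ point par point et préservant globalement $\Vac$, donc aussi $M_\Vac$ (défini par les non-unités de $\Vac$). Si $\xi_i$ est un zéro à la Hensel codé par $(P_i,a_i)$ sur $(K[\xi_1,\ldots,\xi_{i-1}],V_{i-1})$, avec $P_i'(a_i)\in V_{i-1}^{\times}$, l'hypothèse de récurrence fixe dynamiquement les coefficients de $P_i$ ainsi que $a_i$; on en tire $P_i(\varphi(\xi_i))=0$ et $\varphi(\xi_i)-a_i\in M_\Vac$. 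L'unicité du zéro à la Hensel dans $\Kac$, elle-même conséquence dynamique de la condition $P_i'(a_i)\in V_{i-1}^{\times}$, impose alors $\varphi(\xi_i)=\xi_i$. La propagation aux expressions polynomiales donne $\varphi(t)=t$.

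Pour le sens $(2)\Rightarrow(1)$, l'idée est de combiner l'analyse par \pgn de la section \ref{sec calc hens} et la caractérisation classique de $\KH$ comme corps fixe du groupe de décomposition. On calcule dynamiquement le polynôme minimal $\mu_t$ de $t$ sur $K$ (ou sur une sous-extension déjà engendrée par des zéros à la Hensel). Les pentes du \pgn de $\mu_t$ regroupent les racines selon leur valuation; si celle contenant $t$ est isolée de largeur $1$, la proposition \ref{lem NPH} fournit directement une \dimm à la Hensel de $t$, et on est dans le cas $(1)$. Sinon, on raffine à l'intérieur des classes de racines équivaluées par l'examen des polynômes résiduels (licite ici grâce à la séparabilité automatique en caractéristique nulle) afin de scinder autant que possible par des zéros à la Hensel successifs. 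Tant que ce raffinement est fructueux, on accumule une tour de zéros à la Hensel décrivant $t$; dans le cas contraire, il subsiste, après raffinements, une classe de conjugués de $t$ de cardinal $\geq 2$ sur laquelle le groupe de décomposition agit transitivement, et l'on introduit dynamiquement un symbole $\varphi$ envoyant $t$ sur un conjugué $t'\neq t$ de cette classe, ce qui contredit $(2)$.

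L'obstacle principal se situe dans la certification constructive de la cohérence de cet enrichissement dynamique par un automorphisme: il faudrait montrer que la théorie $\Tac(K,V)$, augmentée d'un symbole de fonction $\varphi$ satisfaisant aux règles d'automorphisme de valuation et soumis à la contrainte $\varphi(t)=t'$ pour un conjugué prescrit $t'\neq t$, reste cohérente. Cela demande une extension non triviale du théorème 4.3 de \cite{CLR}, et revient à une forme effective de la transitivité classique du groupe de décomposition sur les prolongements de la valuation à $K(t)$, laquelle repose habituellement sur le lemme de Zorn. Il faudrait aussi garantir la terminaison du procédé de raffinement par \pgn et polynômes résiduels dans le cas ramifié; la séparabilité automatique due à la caractéristique nulle rend ce point plausible, mais sa mise en œuvre effective réclame un contrôle fin des invariants décroissants à chaque raffinement, ce qui explique vraisemblablement que les auteurs signalent ne pas disposer à ce jour d'une preuve constructive.
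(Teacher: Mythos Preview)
Le point essentiel à relever est que le papier ne donne \emph{aucune} preuve de ce théorème~: les auteurs écrivent explicitement, juste avant l'énoncé, \og nous ne savons pas pour le moment en donner une preuve constructive\fg. Il n'y a donc pas de preuve du papier à laquelle comparer ta proposition~; ce que tu proposes est un plan d'attaque, et tu sembles d'ailleurs en être conscient puisque tu conclus toi-même en expliquant pourquoi les auteurs n'ont pas de preuve.

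Sur le fond de ton plan~: l'implication $(1)\Rightarrow(2)$ est effectivement la direction facile, et ton argument par récurrence sur le nombre de zéros à la Hensel est correct et se formalise sans obstacle dans le cadre dynamique (l'unicité du zéro à la Hensel sous la condition $P'(a)\in V^{\times}$ est bien un fait prouvable dans $\Tac$). Pour $(2)\Rightarrow(1)$, ton schéma (raffinement par \pgn et polynômes résiduels, puis production d'un automorphisme séparant $t$ d'un conjugué si le raffinement échoue) est la bonne intuition classique, et tu identifies correctement les deux verrous~: la cohérence de l'enrichissement de $\Tac(K,V)$ par un symbole d'automorphisme contraint à envoyer $t$ sur un conjugué prescrit $t'\neq t$, et la terminaison du raffinement. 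Le premier point est précisément ce qui manque~: c'est une forme effective de la transitivité du groupe de décomposition, et l'analogue du théorème~4.3 de \cite{CLR} pour une théorie à symbole de fonction supplémentaire n'est pas disponible. Ta proposition est donc un plan raisonnable mais pas une preuve~; elle localise honnêtement l'obstruction sans la lever, ce qui est exactement la situation décrite par les auteurs.
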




\tableofcontents
\end{document}